\newcommand{\notleftrightarrow}{\mathrel{\ooalign{$\leftrightarrow$\cr\hidewidth$/$\hidewidth}}}
\newtheorem{theorem}{Theorem}%[section]
\newtheorem{lemma}[theorem]{Lemma}
\theoremstyle{definition}
\newtheorem{example}[theorem]{Example}
\newcommand{\mydiameter}{0.5em}
\tikzset{
%  every picture/.style={scale=2},
  mynode/.style={
    circle,
    draw=black,
    fill=black,
    inner sep=0pt,
    minimum size=\mydiameter,
  },
  mynodered/.style={
    mynode,
    draw=red,
    fill=red,
  },
  mynodeempty/.style={
    mynode,
    fill=white,
  },
  myedge/.style={
    very thick,
    gray!70
  },
  myedge0n/.style={myedge},
  myedge34n/.style={myedge},
  myedge17n/.style={myedge},
  myedge21n/.style={myedge},
  myedge49n/.style={myedge},
  myedge0y/.style={myedge,red},
  myedge34y/.style={myedge,blue},
  myedge17y/.style={myedge,green},
  myedge21y/.style={myedge,purple},
  myedge49y/.style={myedge,pink},
}
\title{The Erd\H{o}s unit distance problem for small point sets}
\author{Boris~Alexeev \and Dustin~G.~Mixon\thanks{Department of Mathematics, The Ohio State University, Columbus, Ohio, USA} \thanks{Translational Data Analytics Institute, The Ohio State University, Columbus, Ohio, USA} \and Hans~Parshall}
\date{}
\begin{document}
\maketitle
\begin{abstract}
We improve the best known upper bound on the number of edges in a unit-distance graph on $n$ vertices for each $n\in\{16,\ldots,30\}$.
When $n\leq 21$, our bounds match the best known lower bounds, and we fully enumerate the densest unit-distance graphs in these cases.

On the combinatorial side, our principle technique is to more efficiently generate $\mathcal{F}$-free graphs for a set of forbidden subgraphs $\mathcal{F}$.  On the algebraic side, we are able to determine programmatically whether many graphs are unit-distance, using a custom embedder that is more efficient in practice than tools such as cylindrical algebraic decomposition.
\end{abstract}

\section{Introduction}

A unit-distance graph is a simple graph $G$ for which there exists an injection $f\colon V(G)\to\mathbb{R}^2$ such that $\{u,v\}\in E(G)$ implies $\|f(u)-f(v)\|=1$.
Let $U(n)$ denote the set of unit-distance graphs on $n$ vertices, and let
\begin{equation*}
\label{eq.u(n)}
u(n)
:=\max\{|E(G)|:G\in U(n)\}
\end{equation*}
denote the maximum number of edges in such a graph.  (This is known as $\operatorname{A186705}(n)$ in the On-Line Encyclopedia of Integer Sequences~\cite{OEIS:24}.)
Erd\H{o}s~\cite{Erdos:46} found that an appropriately dilated version of the $\sqrt{n}\times\sqrt{n}$ grid in $\mathbb{R}^2$ delivers the lower bound $u(n)=n^{1+\Omega(1/\log\log n)}$, and he offered a \$500 prize for determining whether there is a matching upper bound.
To date, the best known upper bound is $u(n)=O(n^{4/3})$; see~\cite{Szemeredi:16} and references therein.

We are concerned with estimating $u(n)$ for small values of $n$.
Schade~\cite{Schade:93} obtained the exact value of $u(n)$ for all $n\leq 14$, as well as the complete sets of densest graphs for $n\le 13$.
Schade also obtained lower and upper bounds for $n\leq 30$, some of which were improved by \'{A}goston and P\'{a}lv\"{o}lgyi~\cite{AgostonP:20}, who also determined the exact value of $u(15)$.
As an example of this state of the art, before the present paper, the best known bounds for $n=21$ were
\begin{equation}
\label{eq.example state of the art}
57\le u(21)\le 68.
\end{equation}
Recently, Engel et al.~\cite{EngelEtal:24} searched for point configurations in the so-called \textit{Moser ring} (or the smaller "Moser lattice") to obtain additional lower bounds for $30<n\leq100$, as well as a larger collection of graphs that achieve the best known lower bounds for $n\leq30$.

In the present paper, we improve the best known upper bounds for $16\leq n\leq 30$, resulting in the exact value of $u(n)$ for every $n\leq 21$ (for example, we establish that the left-hand inequality in \eqref{eq.example state of the art} is tight), and we report the complete sets of densest graphs in these cases.

\begin{theorem}[Main Result]\
\label{thm.main result}
\begin{itemize}
\item[(a)]
For each $n\in\{16,\ldots,21\}$, $u(n)$ is given by the following:
\begin{center}
\begin{tabular}{c|ccccccccccccccccccc}
$n$&%$2$&$3$&$4$&$5$&$6$&$7$&$8$&$9$&$10$&$11$&$12$&$13$&$14$&
$16$&$17$&$18$&$19$&$20$&$21$\\\hline
$u(n)$&%$1$&$3$&$5$&$7$&$9$&$12$&$14$&$18$&$20$&$23$&$27$&$30$&$33$&
$41$&$43$&$46$&$50$&$54$&$57$
\end{tabular}
\end{center}
\item[(b)]
For each $n\in\{22,\ldots,30\}$, $u(n)$ satisfies the following bounds\footnote{The lower bounds were known previously but are included for easy comparison.  The upper bounds are the improvement; for example, the best previously known bound for $n=22$ was $u(22)\le 72$.}:
\begin{center}
\begin{tabular}{c|ccccccccccccccccccccc}
$n$&$22$&$23$&$24$&$25$&$26$&$27$&$28$&$29$&$30$\\\hline
$u(n)\geq$&$60$&$64$&$68$&$72$&$76$&$81$&$85$&$89$&$93$\\
$u(n)\leq$&$61$&$66$&$72$&$78$&$84$&$90$&$96$&$103$&$110$
\end{tabular}
\end{center}
\item[(c)]
For each $n\in\{0,\ldots,21\}$, the densest graphs in $U(n)$ are enumerated\footnote{Almost all of these graphs were previously discovered by Engel et al.\ in~\cite{EngelEtal:24}. The only exception is a graph on $17$ vertices whose unit-distance embedding does not reside within the \textit{Moser ring} they searched.} in Table~\ref{table:big}.
\end{itemize}
\end{theorem}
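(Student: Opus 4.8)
The plan is to reduce both the upper bounds and the completeness of Table~\ref{table:big} to a large but finite computation, since the lower bounds in (a) and (b), as well as almost every graph listed in (c), are already supplied by the constructions of Schade~\cite{Schade:93}, \'{A}goston--P\'{a}lv\"{o}lgyi~\cite{AgostonP:20}, and Engel et al.~\cite{EngelEtal:24}. Fix a finite family $\mathcal{F}$ of graphs that are provably not unit-distance, starting from $K_4$ and $K_{2,3}$ and enlarging it by any further small obstructions uncovered during the search. Because a unit-distance embedding restricts to any subgraph, every graph in $U(n)$ is $\mathcal{F}$-free, so to prove $u(n)\le U$ it suffices to generate all $\mathcal{F}$-free graphs on $n$ vertices with exactly $U+1$ edges and to check that none is unit-distance; together with a construction on $U$ edges this yields $u(n)=U$. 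For part (c), once $u(n)=U$ is known, one instead enumerates all $\mathcal{F}$-free graphs on $n$ vertices with exactly $U$ edges, tests each for a unit-distance embedding, and returns the successful ones up to isomorphism. One may restrict throughout to connected graphs, handling the disconnected case by combining the already-determined values $u(m)$ for $m<n$ with the obvious superadditivity of $u$.

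The combinatorial engine is the efficient generation of $\mathcal{F}$-free graphs of a prescribed order and size without isomorphic duplicates. I would build graphs by canonical augmentation in the spirit of McKay, adding vertices or edges one at a time from a canonical representative of each isomorphism class, and pruning a branch the instant the partial graph is forced to contain a member of $\mathcal{F}$ or can no longer reach the target edge count under degree constraints. Since $\mathcal{F}$-freeness is hereditary, this pruning applies at every node of the search tree rather than only at the leaves; making the forbidden-subgraph test cheap enough to survive the astronomically many partial graphs arising for $n$ up to $30$ and target sizes up to $111$ edges is precisely the implementation challenge.

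The algebraic step decides, for a generated candidate $G$, whether some injection $f\colon V(G)\to\mathbb{R}^2$ places every edge at distance $1$. When $G$ is unit-distance I would locate an embedding numerically by following solution branches of the system of edge equations, then certify it rigorously, either by recognizing the coordinates as algebraic numbers satisfying the equations exactly or by validating a nearby exact solution with interval arithmetic. When $G$ is not unit-distance one needs an exact infeasibility certificate for a system of polynomial equations and inequations; cylindrical algebraic decomposition suffices in principle but is impractical at the required scale, so the custom embedder must produce its own proof of non-realizability, and any minimal non-unit-distance graph it exposes is certified once and added to $\mathcal{F}$ to speed up subsequent generation.

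The principal obstacle is the interaction of these phases at scale: near the extremal edge count the number of $\mathcal{F}$-free graphs on $30$ vertices is enormous, so generation must be tightly interleaved with the embeddability test --- prune by $\mathcal{F}$, prune by algebraic infeasibility, prune by isomorphism --- and the embedder must be simultaneously fast and trustworthy over millions of inputs. A second, subtler obstacle is maintaining rigor end to end: every ``not unit-distance'' verdict that feeds an upper bound, and every graph deposited in $\mathcal{F}$, must rest on an exact certificate rather than a numerical heuristic, which forces a clean separation between a heuristic phase that searches for embeddings and obstructions and a verification phase that checks them symbolically.
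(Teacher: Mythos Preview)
Your high-level outline---enumerate $\mathcal{F}$-free graphs near the target edge count and then test each for embeddability---is the paper's strategy too, but you are missing the two devices that make it feasible, and without them your plan for (b) in particular would not terminate. For the enumeration, McKay-style canonical augmentation with hereditary $\mathcal{F}$-pruning is exactly what the paper tries first; it reports that this reaches $n=15$ in roughly a month of CPU time and then stalls. The step that pushes to $n=21$ is Lemma~\ref{lem.schade}: every graph on $n$ vertices and $m$ edges has an induced subgraph on $n-1$ vertices with at least $\lceil m(n-2)/n\rceil$ edges. Thus $\overline{U}(n,m)$ can be grown from the already-computed sets $\overline{U}(n-1,m')$ over a narrow range of $m'$, by attaching a single vertex of prescribed degree; precomputing the minimal ``bad neighborhoods'' whose containment would create a member of $\mathcal{F}$, and pruning on minimum degree, then bring $n=21$ within about a thousand CPU hours. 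It also turns out that $\overline{u}(n)$ already equals the known lower bound for every $n\le 21$, so there are no $\mathcal{F}$-free graphs at $u(n)+1$ edges and no embeddability test is needed for part (a) at all.

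For part (b) you propose to enumerate $\mathcal{F}$-free graphs on up to $30$ vertices and up to $111$ edges directly, and you flag this as ``the implementation challenge.'' This is not what the paper does, and it is not feasible: the paper estimates that even the $61$-edge case at $n=22$ would cost about $15{,}000$ CPU hours. Instead the paper establishes the single bound $u(22)\le 61$---by computing $\overline{u}(22)=62$, observing that exactly two $\mathcal{F}$-free graphs attain it, and noting that each contains a \emph{totally unfaithful} unit-distance subgraph (one with two non-adjacent vertices forced to unit distance in every embedding), hence cannot be edge-maximal unit-distance---and then applies Lemma~\ref{lem.schade} contrapositively, $u(n)\le\lfloor u(n-1)\cdot n/(n-2)\rfloor$, to propagate this one anchor up to $n=30$. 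The totally-unfaithful filter is also what eliminates most non-embeddable candidates in part (c) before the embedder is ever invoked; and the embedder itself is not numerical search plus certification, but a linear-algebraic procedure over $\mathbb{C}$ that collects rhombus constraints from $4$-cycles, detects forced collisions and forced unit distances from the kernel, and branches via a Heron-type identity (Lemma~\ref{lem.subtle}) when three edge-vectors are linearly dependent.
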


The crux of our problem is determining whether a given graph is unit-distance, which amounts to solving a system of polynomial equations over $\mathbb{R}$.
In theory, one could solve such a system using cylindrical algebraic decomposition~\cite{Collins:75}, but since this algorithm exhibits double-exponential runtime (and tends to be slow even for typical real-world instances), this is impractical for graphs on at least $10$ vertices, say.
We sidestep this issue by leveraging recent work by Globus and Parshall~\cite{GlobusP:19}, effectively factoring out much of the (hard) semialgebraic geometry and reducing it to (easy) combinatorics.
Our overall approach (detailed below) is to successively apply three different tests to filter out graphs until only unit-distance graphs remain; see Table~\ref{table:filtering} for how many graphs are filtered out by each test.

First, Globus and Parshall~\cite{GlobusP:19} determined the set $\mathcal{F}$ of $74$ minimal forbidden subgraphs of unit-distance graphs on at most $9$ vertices.
In Section~\ref{section:forbid}, we describe how to enumerate the graphs $\overline{U}(n)$ on $n$ vertices that are $\mathcal{F}$-free.
Notice that the maximum density of such graphs gives an upper bound $\overline{u}(n)$ on $u(n)$.
Using standard graph enumeration tools such as \texttt{nauty}~\cite{McKayP:online}, we are able to compute $u(n)$ for $n\le 15$, but this becomes impractical for $n>15$.
We can continue to extract upper bounds on $\overline{u}(n)$ for larger $n$ by applying an observation due to Schade~\cite{Schade:93} that every dense graph necessarily contains a dense subgraph, together with several tricks.
These enumeration tricks represent the main combinatorial innovation of this paper, allowing us to push this enumeration further than it seems would be possible with other tools.
It turns out that these further upper bounds match the best known lower bound on $u(n)$ when $n\leq 21$.
This proves parts (a) of our main result, and part (b) follows shortly as well.

For part (c), take any $n\leq 21$.
The process described above not only computes $\overline{u}(n)$, but also enumerates all graphs in $\overline{U}(n)$ with $\overline{u}(n)$ edges.
Since $u(n) = \overline{u}(n)$, this means that we have a superset of the set of unit-distance graphs on $n$ vertices and $u(n)$ edges.
We need to identify which of these are unit-distance graphs and find embeddings for each of them.
In Section~\ref{section:tuud}, we leverage another idea due to Globus and Parshall~\cite{GlobusP:19}, namely, \textit{totally unfaithful unit-distance graphs}.
In particular, there are a handful of small unit-distance graphs with two distinguished non-adjacent vertices such that for every unit-distance embedding of the graph, the distinguished vertices are necessarily unit distance apart.
Note that any graph on $n$ vertices and $u(n)$ edges with such a substructure is necessarily not unit-distance; indeed, if it were, then it would still be unit-distance after adding an edge between the distinguished vertices, but then it would have more than $u(n)$ edges, a contradiction.
This rules out most candidates.

\begin{table}[t]
\caption{\label{table:filtering}
Numbers of graphs filtered out by each test in this paper.}
\begin{center}
\begin{tabular}{|r|r|rrrr|}\hline
\rowcolor{gray!30}
$n$ & $u(n)$ & \footnotesize{\begin{tblr}{colspec={l},rowsep=0.5pt}number of graphs\\with $n$ vertices\\and $u(n)$ edges\end{tblr}} & \footnotesize{\begin{tblr}{colspec={l},rowsep=0.5pt}... that are\\ \phantom{...} $\mathcal{F}$-free \\ \phantom{...} \end{tblr}} & \footnotesize{\begin{tblr}{colspec={l},rowsep=0.5pt}... and totally\\\phantom{...} unfaithful-free\\ \phantom{...}\end{tblr}} & \footnotesize{\begin{tblr}{colspec={l},rowsep=0.5pt}... and embeddable\\\phantom{...} (thus counting all\\\phantom{...} unit-distance graphs)\end{tblr}} \\ \hline
0  & 0  & 1                  & 1   & 1  & 1  \\
\rowcolor{gray!10}
1  & 0  & 1                  & 1   & 1  & 1  \\
2  & 1  & 1                  & 1   & 1  & 1  \\
\rowcolor{gray!10}
3  & 3  & 1                  & 1   & 1  & 1  \\
4  & 5  & 1                  & 1   & 1  & 1  \\
\rowcolor{gray!10}
5  & 7  & 4                  & 1   & 1  & 1  \\
6  & 9  & 21                 & 4   & 4  & 4  \\
\rowcolor{gray!10}
7  & 12 & 131                & 1   & 1  & 1  \\
8  & 14 & 1646               & 3   & 3  & 3  \\
\rowcolor{gray!10}
9  & 18 & 34040              & 1   & 1  & 1  \\
10 & 20 & $1.1\cdot 10^{6}$  & 1   & 1  & 1  \\
\rowcolor{gray!10}
11 & 23 & $5.3\cdot 10^{7}$  & 2   & 2  & 2  \\
12 & 27 & $5.5\cdot 10^{9}$  & 1   & 1  & 1  \\
\rowcolor{gray!10}
13 & 30 & $5.8\cdot 10^{11}$ & 1   & 1  & 1  \\
14 & 33 & $7.9\cdot 10^{13}$ & 2   & 2  & 2  \\
\rowcolor{gray!10}
15 & 37 & $2.5\cdot 10^{16}$ & 1   & 1  & 1  \\
16 & 41 & $1.1\cdot 10^{19}$ & 1   & 1  & 1  \\
\rowcolor{gray!10}
17 & 43 & $1.5\cdot 10^{21}$ & 15  & 8  & 7  \\
18 & 46 & $4.7\cdot 10^{23}$ & 84  & 38 & 16 \\
\rowcolor{gray!10}
19 & 50 & $4.2\cdot 10^{26}$ & 17  & 5  & 3  \\
20 & 54 & $4.8\cdot 10^{29}$ & 7   & 1  & 1  \\
\rowcolor{gray!10}
21 & 57 & $2.6\cdot 10^{32}$ & 149 & 19 & 5  \\ \hline
\end{tabular}
\end{center}
\end{table}

At this point, we have exhausted our ideas for using combinatorics to avoid solving polynomial systems, but we still need to find unit-distance embeddings for various graphs.
To this end, we present a custom embeddability solver in Section~\ref{section:embed}, which is the main algebraic innovation of this paper.
This is an algorithm that, given a graph, either returns a unit-distance embedding, or reports ``not unit-distance,'' or reports ``I don't know.''
Unlike more general (slow) tools from semialgebraic geometry, this algorithm is highly specialized to our use case: it applies basic moves from Euclidean geometry and linear algebra to reason about the set of embeddings, and it is designed to perform better for denser graphs.
Accordingly, our algorithm is much faster (often taking about a second for the graphs we consider, though sometimes longer), and it never reports ``I don't know'' for the graphs on $n$ vertices and $u(n)$ edges that survived the filtering from Section~\ref{section:tuud}.
This proves part (c) of our main result.

We conclude in Section~\ref{section:discuss} with a brief discussion.

\section{Filtering with forbidden subgraphs}
\label{section:forbid}

Recently, Globus and Parshall~\cite{GlobusP:19} determined the minimal forbidden subgraphs of $U(n)$ for every $n\leq 9$.
(These were previously known for every $n\leq 7$; see Chilakamarri and Mahoney~\cite{ChilakamarriM:95}.)
Let $\mathcal{F}$ denote this set of $74$ graphs, let $\overline{U}(n)$ denote the set of $\mathcal{F}$-free simple graphs on $n$ vertices, and define
\[
\overline{u}(n)
:=\max\{|E(G)|:G\in\overline{U}(n)\}.
\]
Since $U(n)\subseteq \overline{U}(n)$, we have $u(n)\leq \overline{u}(n)$.
For each $n\leq 23$, we compute $\overline{u}(n)$ (and the set of graphs that achieve this density, except for $n=23$).
For $n\leq 21$, this upper bound happens to match the corresponding lower bound due to Schade~\cite{Schade:93}.
(For $n=22$ and $n=23$, these upper bounds do not match the best known lower bounds, so the situation is slightly more complicated as we describe in Section~\ref{section:tuud}.)

For a given $n,m\in\mathbb{N}$, we are interested in constructing the set $\overline{U}(n,m)$ of $\mathcal{F}$-free simple graphs on $n$ vertices with $m$ edges.
Indeed, if we take $\overline{u}(n,m):=|\overline{U}(n,m)|$, then
\[
\overline{u}(n)=\max\{m:\overline{u}(n,m)>0\},
\]
and the densest graphs in $\overline{U}(n)$ are given by $\overline{U}(n,\overline{u}(n))$.
The naive approach here is to generate all graphs consisting of $n$ vertices and $m$ edges before testing for $\mathcal{F}$-freeness.
This allows one to compute $\overline{u}(n)$ for every $n\leq 10$, though depending on the programming details, the $n=10$ case can take hours.
Alternatively, one might be inclined to use \texttt{nauty}~\cite{McKayP:online} to construct $\overline{U}(n,m)$.
McKay~\cite{McKay:online} suggests adding certain code to \texttt{nauty} in order to support forbidding subgraphs.
This allows one to compute $\overline{u}(n)$ for every $n\leq 15$, though the $n=15$ case takes over a month.
Notably, this already gives the first new value of $u(n)$ in Theorem~\ref{thm.main result}.
In order to approach larger values of $n$, we apply the following observation, as recorded by Schade~\cite{Schade:93}:

\begin{lemma}
\label{lem.schade}
A simple graph with $n\geq1$ vertices and $m$ edges contains an induced subgraph with $n-1$ vertices and at least $\lceil m\cdot\frac{n-2}{n}\rceil$ edges.
\end{lemma}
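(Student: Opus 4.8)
The plan is to prove this by a short double-counting (averaging) argument over the $n$ vertex-deleted induced subgraphs of $G$, so no semialgebraic or geometric input is needed at all.

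First, for each vertex $v\in V(G)$ I would consider the subgraph $G-v$ of $G$ induced on $V(G)\setminus\{v\}$, which has exactly $n-1$ vertices. The key observation is that a fixed edge $\{x,y\}\in E(G)$ appears in $G-v$ precisely when $v\notin\{x,y\}$, i.e.\ for exactly $n-2$ of the $n$ choices of $v$. Summing the edge counts over all vertices therefore gives
\[
\sum_{v\in V(G)}|E(G-v)|=(n-2)\,|E(G)|=(n-2)m.
\]

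Next, by the pigeonhole principle at least one vertex $v$ satisfies $|E(G-v)|\geq\frac{(n-2)m}{n}$. Since $|E(G-v)|$ is a nonnegative integer, this inequality upgrades to $|E(G-v)|\geq\big\lceil\frac{(n-2)m}{n}\big\rceil=\big\lceil m\cdot\frac{n-2}{n}\big\rceil$, and $G-v$ is the desired induced subgraph on $n-1$ vertices. (Equivalently, one could instead pick $v$ of minimum degree $\delta$, note $\delta\leq\lfloor 2m/n\rfloor$ by the handshake lemma, and observe $|E(G-v)|=m-\delta\geq m-2m/n$ before rounding up.)

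I do not expect any genuine obstacle here; the only step that requires a moment's care is the final rounding, which is valid purely because an edge count is an integer. The boundary cases $n=1$ and $n=2$, where $n-2\leq0$, are vacuous or trivial, since then $m$ is forced to be small and the claimed bound is $0$.
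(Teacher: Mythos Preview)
Your argument is correct and is essentially the same as the paper's: the paper phrases the averaging step probabilistically (delete a uniformly random vertex and compute $\mathbb{E}|E(H)|=m-2m/n$), which is exactly your double-counting identity $\sum_v |E(G-v)|=(n-2)m$ divided by $n$. The concluding step---some term meets the average, and integrality lets you round up---is identical.
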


\begin{proof}
Given such a graph $G$, draw a vertex $v$ uniformly at random from $V(G)$ and delete it to produce a random induced subgraph $H$ on $n-1$ vertices.
Then
\[
\mathbb{E}|E(H)|
=m-\mathbb{E}\operatorname{deg}(v)
=m-\frac{1}{n}\sum_{u\in V(G)}\operatorname{deg}(u)
=m-\frac{1}{n}\cdot 2m.
\]
Finally, the maximum of a random variable is an integer and at least its expectation.
\end{proof}

Given $U(n',m')$ for $n'=n-1$ and each $m'\geq\lceil m\cdot\frac{n-2}{n}\rceil$, one may construct $U(n,m)$ by first considering all possible ways of adding a vertex of degree $m-m'$ to each graph in each $U(n',m')$ and then testing for $\mathcal{F}$-freeness.
Naively implementing this trick allows us to solve the $n=16$ case.
For a smarter implementation, consider the set
\[
\mathcal{F}'
:=\{(F-v,S):F\in\mathcal{F},~v\in V(F),~S=N(v)\}.
\]
Fix $H\in U(n',m')$.
Then for every $(F',S)\in\mathcal{F}'$, we find every copy of $F'$ in $H$ and store the image $T$ of $S$ under the corresponding injection.
The result of this computation is the collection $\mathcal{T}$ of subsets $T\subseteq V(H)$ such that the graph obtained by adding a vertex to $H$ with neighborhood $N\subseteq V(H)$ is $\mathcal{F}$-free if and only if there is no $T\in\mathcal{T}$ such that $N\supseteq T$.
That is, $\mathcal{T}$ is the set of ``bad neighborhoods,'' and we can grow $H$ by adding any vertex whose neighborhood does not contain a bad neighborhood.
This implementation allows us to determine $u(17)$ fairly quickly, and parallelizing the code determines $u(18)$ and $u(19)$ in about 5,000 total CPU hours.

For a more efficient implementation, note that the above logic only requires the minimal subsets $\mathcal{T}'$ in $\mathcal{T}$.
To obtain $\mathcal{T}'$, we first initialize $\mathcal{T}'=\emptyset$.
Then for each $k\geq 1$, we consider each $T\subseteq V(H)$ of size $k$ that does not contain some member of $\mathcal{T}'$.
If for some $(F',S)\in\mathcal{F}'$ there exists a copy of $F'$ in $H$ such that the image of $S$ under the corresponding injection equals $T$, then we add $T$ to $\mathcal{T}'$.
This determines $u(20)$ in about 100 total CPU hours.

For $u(21)$, we consider each $U(n',m')$ with $n'=n-1$ and $m'\geq\lceil m\cdot\frac{n-2}{n}\rceil$ in decreasing order of $m'$.
If $H\in U(n',m')$ has minimum degree $\delta(H)\leq m-m'-2$, then we need not consider $H$.
Indeed, adding a vertex to $H$ of degree $m-m'$ will produce a graph $G\in U(n,m)$ with minimum degree $\delta(H)$ or $\delta(H)+1$, in which case removing this vertex produces a graph $H'$ in either $U(n',m-\delta(H))$ or $U(n',m-\delta(H)-1)$.
Since $m-\delta(H)-1\geq m'+1$, this means $G$ was already considered in a previous iteration.
Next, if $H\in U(n',m')$ has minimum degree $\delta(H)=m-m'-1$, then for similar reasons, we need only consider adding a vertex to $H$ if the new vertex is adjacent to all of the minimum-degree vertices of $H$.
In particular, if $H$ has more than $m-m'$ vertices of minimum degree, then we need not consider $H$.
Furthermore, in the previous paragraph, we need only consider the subsets $T$ that contain all vertices of minimum degree.
Parallelizing this modified implementation determines $u(21)$ in about 1,000 total CPU hours.  A slight extension of this computation also determines $\overline{u}(22)=62$ and $\overline{u}(23)=66$, but these no longer match the best known lower bounds.

This concludes our proof of Theorem~\ref{thm.main result}(a).

\section{Filtering with totally unfaithful unit-distance graphs}
\label{section:tuud}

In this section, we review another important concept due to Globus and Parshall~\cite{GlobusP:19}.
We say a unit-distance graph is \textit{totally unfaithful} if it has a pair of non-adjacent vertices with the property that for every unit-distance embedding of the graph, the vertices are unit distance apart.
Such graphs were used by Globus and Parshall to identify forbidden subgraphs in unit-distance graphs.
Figure~\ref{figure:tuud} illustrates six totally unfaithful graphs, along with a pair of vertices in red that are forced to have unit distance.

\begin{figure}[htb]
\begin{center}
\begin{tikzpicture}[scale=2,remember picture, overlay]
\coordinate (0) at (2.771286446121830945,1.550844034075882168);
\coordinate (1) at (0.000000000000000000,0.000000000000000000);
\coordinate (2) at (2.000000000000000000,0.000000000000000000);
\coordinate (3) at (1.271286446121830945,2.416869437860320815);
\coordinate (4) at (0.7712864461218309450,1.550844034075882168);
\coordinate (5) at (2.271286446121830945,2.416869437860320815);
\coordinate (6) at (0.8333333333333333333,0.5527707983925666415);
\coordinate (7) at (-0.06204688721150238832,0.9980732356833155264);
\coordinate (8) at (0.5000000000000000000,0.8660254037844386468);
\coordinate (9) at (2.833333333333333333,0.5527707983925666415);
\coordinate (10) at (1.937953112788497612,0.9980732356833155264);
\coordinate (11) at (1.333333333333333333,1.418796202177005288);
\coordinate (12) at (0.4379531127884976117,1.864098639467754173);
\coordinate (13) at (2.333333333333333333,1.418796202177005288);
\coordinate (14) at (1.437953112788497612,1.864098639467754173);
\coordinate (15) at (1.771286446121830945,1.550844034075882168);
\coordinate (16) at (1.500000000000000000,0.8660254037844386468);
\coordinate (17) at (1.000000000000000000,0.000000000000000000);
\coordinate (18) at (1.833333333333333333,0.5527707983925666415);
\coordinate (19) at (0.9379531127884976117,0.9980732356833155264);
\coordinate (20) at (1.271286446121830945,0.6848186302914435211);
\coordinate (21) at (2.500000000000000000,0.8660254037844386468);
\coordinate (22) at (1.104619779455164278,0.4453024372907488849);
\coordinate (23) at (2.166666666666666667,1.971567000569571930);
\coordinate (24) at (1.763963292114888542,0.9187323808274457991);
\coordinate (25) at (1.728713553878169055,1.839519168670695050);
\coordinate (26) at (2.000000000000000000,1.732050807568877294);
\coordinate (27) at (1.000000000000000000,1.732050807568877294);
\coordinate (28) at (1.333333333333333333,-0.3132546053918720052);
\coordinate (29) at (0.4379531127884976117,0.1320478318988768796);
\coordinate (30) at (2.271286446121830945,0.6848186302914435211);
\coordinate (31) at (2.333333333333333333,-0.3132546053918720052);
\coordinate (32) at (1.437953112788497612,0.1320478318988768796);
\end{tikzpicture}
\begin{tabular}{ccc}
\begin{tikzpicture}
\draw[myedge] (11) -- (13);
\draw[myedge] (13) -- (18);
%\draw[myedge] (11) -- (18);
\draw[myedge] (13) -- (16);
\draw[myedge] (17) -- (18);
\draw[myedge] (16) -- (17);
\draw[myedge] (8) -- (17);
\draw[myedge] (8) -- (11);
\draw[myedge] (8) -- (16);
\node[mynode,at=(13)] {};
\node[mynode,at=(17)] {};
\node[mynodered,at=(11)] {};
\node[mynodered,at=(18)] {};
\node[mynode,at=(16)] {};
\node[mynode,at=(8)] {};
\end{tikzpicture}
&
\begin{tikzpicture}
\draw[myedge] (2) -- (21);
\draw[myedge] (16) -- (21);
\draw[myedge] (16) -- (27);
\draw[myedge] (8) -- (27);
\draw[myedge] (2) -- (17);
\draw[myedge] (16) -- (17);
\draw[myedge] (8) -- (17);
\draw[myedge] (2) -- (16);
\draw[myedge] (8) -- (16);
%\draw[myedge] (21) -- (26);
\draw[myedge] (26) -- (27);
\draw[myedge] (16) -- (26);
\node[mynodered,at=(21)] {};
\node[mynode,at=(27)] {};
\node[mynode,at=(17)] {};
\node[mynode,at=(2)] {};
\node[mynodered,at=(26)] {};
\node[mynode,at=(16)] {};
\node[mynode,at=(8)] {};
\end{tikzpicture}
&
\begin{tikzpicture}
\draw[myedge] (2) -- (21);
%\draw[myedge] (16) -- (21);
\draw[myedge] (16) -- (27);
\draw[myedge] (8) -- (27);
\draw[myedge] (2) -- (17);
\draw[myedge] (16) -- (17);
\draw[myedge] (8) -- (17);
\draw[myedge] (2) -- (16);
\draw[myedge] (8) -- (16);
\draw[myedge] (21) -- (26);
\draw[myedge] (26) -- (27);
\draw[myedge] (16) -- (26);
\node[mynodered,at=(21)] {};
\node[mynode,at=(27)] {};
\node[mynode,at=(17)] {};
\node[mynode,at=(2)] {};
\node[mynode,at=(26)] {};
\node[mynodered,at=(16)] {};
\node[mynode,at=(8)] {};
\end{tikzpicture}
\\[12pt]
\begin{tikzpicture}
\draw[myedge] (13) -- (16);
\draw[myedge] (11) -- (13);
\draw[myedge] (13) -- (18);
\draw[myedge] (16) -- (17);
\draw[myedge] (8) -- (16);
%\draw[myedge] (17) -- (18);
\draw[myedge] (8) -- (17);
%\draw[myedge] (11) -- (18);
\draw[myedge] (8) -- (11);
\draw[myedge] (1) -- (17);
\draw[myedge] (6) -- (11);
\draw[myedge] (6) -- (18);
\draw[myedge] (1) -- (6);
\draw[myedge] (1) -- (8);
\node[mynode,at=(13)] {};
\node[mynode,at=(16)] {};
\node[mynode,at=(17)] {};
\node[mynodered,at=(11)] {};%
\node[mynodered,at=(18)] {};
\node[mynode,at=(1)] {};
\node[mynode,at=(6)] {};
\node[mynode,at=(8)] {};
\end{tikzpicture}
&
\begin{tikzpicture}
\draw[myedge] (13) -- (16);
\draw[myedge] (11) -- (13);
\draw[myedge] (13) -- (18);
\draw[myedge] (16) -- (17);
\draw[myedge] (8) -- (16);
%\draw[myedge] (17) -- (18);
\draw[myedge] (8) -- (17);
%\draw[myedge] (11) -- (18);
\draw[myedge] (8) -- (11);
\draw[myedge] (1) -- (17);
\draw[myedge] (6) -- (11);
\draw[myedge] (6) -- (18);
\draw[myedge] (1) -- (6);
\draw[myedge] (1) -- (8);
\node[mynode,at=(13)] {};
\node[mynode,at=(16)] {};
\node[mynodered,at=(17)] {};
\node[mynode,at=(11)] {};%
\node[mynodered,at=(18)] {};
\node[mynode,at=(1)] {};
\node[mynode,at=(6)] {};
\node[mynode,at=(8)] {};
\end{tikzpicture}
&
\begin{tikzpicture}
\draw[myedge] (13) -- (16);
\draw[myedge] (11) -- (13);
\draw[myedge] (13) -- (18);
\draw[myedge] (16) -- (17);
\draw[myedge] (8) -- (16);
%\draw[myedge] (17) -- (18);
%\draw[myedge] (8) -- (17);
%\draw[myedge] (11) -- (18);
\draw[myedge] (8) -- (11);
\draw[myedge] (1) -- (17);
\draw[myedge] (6) -- (11);
\draw[myedge] (6) -- (18);
\draw[myedge] (1) -- (6);
\draw[myedge] (1) -- (8);
\node[mynode,at=(13)] {};
\node[mynode,at=(16)] {};
\node[mynodered,at=(17)] {};
\node[mynode,at=(11)] {};%
\node[mynodered,at=(18)] {};
\node[mynode,at=(1)] {};
\node[mynode,at=(6)] {};
\node[mynode,at=(8)] {};
\end{tikzpicture}
\end{tabular}
\end{center}
\caption{\label{figure:tuud}
Some totally unfaithful unit-distance graphs.}
\end{figure}
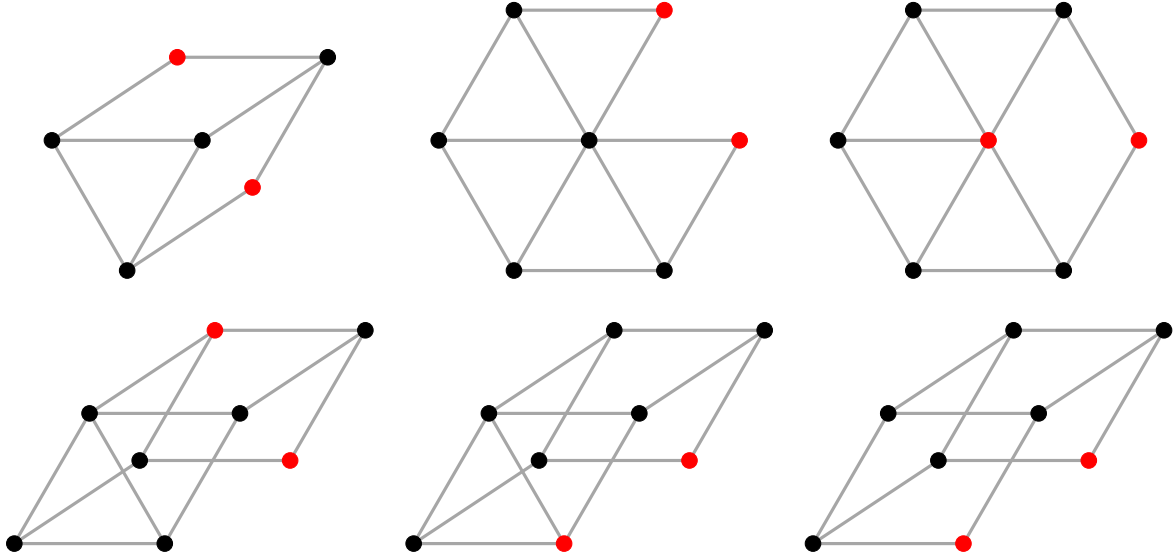

The first five graphs are used by Globus and Parshall, though note that the fourth and fifth graph are the same: two different pairs of vertices are forced to have unit distance in every unit-distance embedding of this graph.
The proof that these graphs are totally unfaithful uses geometric reasoning involving rhombi and equilateral triangles.
The sixth graph is a simplification of the fifth, as the ``cross edge" is not actually needed; in particular, the fifth graph is not a \emph{minimal} totally unfaithful graph.
While Globus and Parshall do not find it necessary to use the sixth graph (as opposed to the fifth), we find it helpful for our purposes.

We use totally unfaithful unit-distance graphs to filter out certain graphs that came from the previous section.
In particular, given a graph on $n$ vertices and $u(n)$ edges, we test to see if it contains a totally unfaithful (not necessarily induced) subgraph for which the distinguished pair of vertices is non-adjacent.
Then one may conclude that this graph is not unit-distance, since otherwise one may add an edge to obtain a unit-distance graph on $n$ vertices with more edges than $u(n)$.

As one can see from Table~\ref{table:filtering}, totally unfaithful graphs are fairly effective at filtering out non-unit-distance graphs, eliminating a particularly large proportion of candidates for $n=21$, where many more non-unit-distance graphs begin to pass the $\mathcal{F}$-free test.

As mentioned earlier, the situation for $n=22$ is interesting.
The densest-known unit-distance graph of this order has $60$ edges.
Using the enumeration techniques from the previous section, we find that $\overline{u}(22) = 62$, and there are exactly two $\mathcal{F}$-free graphs of this size.
It turns out that both of these graphs contain totally unfaithful subgraphs, and so it follows that $u(22)\le 61$.
In particular, $n=22$ is the smallest value of $n$ where $u(n)<\overline{u}(n)$; we expect this to be true for all larger $n$ as well.

We estimate it would take 15,000 total CPU hours to use the techniques from the previous section to enumerate all $\mathcal{F}$-free graphs with $22$ vertices and $61$ edges.
If either all of the resulting candidate graphs can be eliminated using the techniques from this or the following section, or if one of them could be embedded, this would determine $u(22)$.
We attempted to partially enumerate $\mathcal{F}$-free graphs with $22$ vertices and $60$ edges (the densest-known size).
Looking through those graphs, we found at least 1,420 that are $\mathcal{F}$-free and at least 25 that are unit-distance graphs; Engel et al.~\cite{EngelEtal:24} find at least 35 unit-distance graphs.

The techniques from the previous section also determine that $\overline{u}(23)=66$, but again we cannot determine the precise value of $u(23)$.
We also attempted to partially enumerate $\mathcal{F}$-free graphs with $23$ vertices and $64$ edges (the densest-known size).
Looking through those graphs, we found at least 3,177 that are $\mathcal{F}$-free and at least 7 that are unit-distance graphs; Engel et al.~find at least 10 unit-distance graphs.

We conclude this section by noting that Theorem~\ref{thm.main result}(b) follows from using Lemma~\ref{lem.schade} to extrapolate from the fact that $u(22)\le 61$.

\section{Filtering with a custom embeddability solver}
\label{section:embed}

We seek an algorithm that receives a simple graph $G$ and returns one of three things:
\begin{itemize}
\item[(i)]
an injection $f\colon V(G)\to\mathbb{R}^2$ such that $\{u,v\}\in E(G)$ implies $\|f(u)-f(v)\|=1$,
\item[(ii)]
a proof that no such injection exists, or
\item[(iii)]
the statement ``I don't know.''
\end{itemize}
Of course, the algorithm would be more informative if it avoids (iii) for more graphs $G$, but in practice, runtime is also an important consideration.
For example, one could avoid (iii) for every graph by running cylindrical algebraic decomposition~\cite{Collins:75}, but this is impractical due to its double-exponential runtime.
In this section, we present an efficient (yet informative) alternative that applies a series of basic moves from Euclidean geometry and linear algebra.

In what follows, we identify $\mathbb{R}^2$ with $\mathbb{C}$.
Given a simple graph $G$ and a linear operator $A\colon \mathbb{C}^{V(G)}\to\mathbb{C}^{d_A}$, we denote the sentences
\begin{align*}
\exists[f|G]
&=\text{``there exists a unit-distance embedding $f\in\mathbb{C}^{V(G)}$ of $G$''}\\
\exists[f|G,A]
&=\text{``there exists a unit-distance embedding $f\in\operatorname{ker}A$ of $G$''}
\end{align*}
To prove that a graph $G$ does not have a unit-distance embedding, we perform a sequence of logic moves.
We end up getting a lot of mileage out of just four types of logic moves, which we enunciate now and explain later.
The following are expressed in terms of an arbitrary nonnegative integer $i\in\mathbb{N}\cup\{0\}$ and binary string $s\in\{0,1\}^*$:
\begin{itemize}
\item[(L0)]
$\exists[f|G]\Rightarrow\exists[f|G_0,A_0]$
\item[(L1)]
$\neg\exists[f|G_i,A_s]$
\item[(L2)]
$\exists[f|G_i,A_s]\Rightarrow\exists[f|G_{i+1},A_{s0}]$
\item[(L3)]
$\exists[f|G_i,A_s]\Rightarrow\exists[f|G_i,A_{s0}]\vee \exists[f|G_i,A_{s1}]$
\end{itemize}
In practice, we start by applying (L0), and then we proceed by iteratively applying (L1), (L2) and (L3).
Whenever possible, we apply (L1) next.
Otherwise, whenever possible, we apply (L2) next.
Otherwise, whenever possible, we apply (L3) next.
The algorithm terminates if we can logically conclude $\neg\exists[f|G]$, or if there are no more moves available.
In the latter case, we attempt to find an embedding of $G$ that resides in $\operatorname{ker}A_s$ for some $s\in\{0,1\}^*$.

Having established the general structure of the algorithm, we now discuss the details of (L0)--(L3).
For (L0), we identify all $4$-cycles in $G$.
Indeed, if
\[
v_1\leftrightarrow v_2\leftrightarrow v_3\leftrightarrow v_4\leftrightarrow v_1,
\]
then for any unit-distance embedding $f$ of $G$, it necessarily holds that $f(v_1),f(v_2),f(v_3),f(v_4)$ are neighboring vertices of a rhombus, and so $f(v_1)+f(v_3)=f(v_2)+f(v_4)$.
We encode all such constraints as $A_0f=0$, and we put $G_0:=G$.
We will use two different implementations of (L1), which we label (L1a) and (L1b).
For (L1a), we determine whether there exist $v_1,v_2\in V(G_i)$ with $v_1\neq v_2$ such that $f(v_1)=f(v_2)$ for every $f\in\operatorname{ker}A_s$.
If so, then we may conclude $\neg\exists[f|G_i,A_s]$ due to vertex collision.

\begin{example}
Suppose $G=K_4$.
We start by applying (L0).
Since every $4$-tuple of vertices forms a $4$-cycle in $G$, it follows that (the matrix representation of) $A_0$ is a $6\times 4$ matrix whose rows are all permutations of $(+1,-1,+1,-1)$.
Every member of $\operatorname{ker}A_0$ is a scalar multiple of the all-ones vector.
As such, \textit{every} pair of vertices exhibits a vertex collision.
Applying (L1a) then gives $\neg\exists[f|K_4]$, i.e., $K_4$ is not a unit-distance graph.
\qed
\end{example}

For (L1b), we find $v_1,v_2,v_3,v_4\in V(G_i)$ and $\omega\in\mathbb{C}$ such that
\[
v_1\leftrightarrow v_2,
\qquad
v_3\leftrightarrow v_4,
\qquad
|\omega|\neq 1,
\qquad
f(v_1)-f(v_2)=\omega\big(f(v_3)-f(v_4)\big)
\qquad
\forall f\in\operatorname{ker}A_s.
\]
This can be accomplished by performing the following computation for each of the appropriate $v_1,v_2,v_3,v_4\in V(G_i)$:
Take the mapping $B\colon\operatorname{ker}A_s\to\mathbb{C}^2$ defined by
\[
B(f)
=\big(f(v_1)-f(v_2),f(v_3)-f(v_4)\big)
\]
and determine whether $\operatorname{im}B$ is $1$-dimensional.
If so, select any nonzero $(x,y)\in(\operatorname{im}B)^\perp$ and test whether $\omega:=-\overline{y}/\overline{x}$ has unit modulus.
If not, then every embedding $f\in\operatorname{ker}A_s$ of $G_i$ fails to ensure that both $\{f(v_1),f(v_2)\}$ and $\{f(v_3),f(v_4)\}$ have unit distance, and so $\neg\exists[f|G_i,A_s]$.

For (L2), we similarly find $v_1,v_2,v_3,v_4\in V(G_i)$ and $\omega\in\mathbb{C}$ such that
\[
v_1\leftrightarrow v_2,
\qquad
v_3\notleftrightarrow v_4,
\qquad
|\omega|=1,
\qquad
f(v_1)-f(v_2)=\omega\big(f(v_3)-f(v_4)\big)
\qquad
\forall f\in\operatorname{ker}A_s.
\]
This can be accomplished by performing a computation similar to (L1b).
Note that this implies that for every embedding $f\in\operatorname{ker}A_s$ of $G_i$, it holds that $|f(v_3)-f(v_4)|=1$, and so $f$ is also a unit-distance embedding of the graph $G_{i+1}$ obtained by adding the edge $\{v_3,v_4\}$ to $G_i$.
We collect any additional rhombus constraints (as in (L0)) that are introduced by this new edge, and we append them to $A_s$ to get $A_{s0}$.
For (L3), we find $v_1,\ldots,v_6\in V(G_i)$ and a nonzero vector $(a,b,c)\in\mathbb{C}^3$ such that $v_1\leftrightarrow v_2$, $v_3\leftrightarrow v_4$, $v_5\leftrightarrow v_6$, and furthermore,
\[
a\big(f(v_1)-f(v_2)\big)
+b\big(f(v_3)-f(v_4)\big)
+c\big(f(v_5)-f(v_6)\big)
=0
\qquad
\forall f\in\operatorname{ker}A_s.
\]
This can be accomplished by performing a similar computation to the one described for (L2).
Once such a linear relationship is forced, then by the following lemma (which we prove later), we may conclude that one of two additional linear relationships must also hold:

\begin{lemma}
\label{lem.subtle}
Given $a,b,c,x,y,z\in\mathbb{C}$ such that $a x+b y+c z=0$, $|x|=1$, $|y|=1$, and $|z|=1$, then $(x,y)$ necessarily satisfies
\[
(|a|^2+|b|^2-|c|^2+di)a\cdot x
+2|a|^2b\cdot y
=0,
\]
where $d$ is some solution to $d^2=(2|a||b|)^2-(|a|^2+|b|^2-|c|^2)^2$.
\end{lemma}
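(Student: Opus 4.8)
The plan is to eliminate $z$ from the relation $ax+by+cz=0$ by passing to moduli, which turns the linear constraint into information about $\operatorname{Re}(a\bar b\,x\bar y)$, and then to recover the claimed linear relation between $x$ and $y$ by pure algebra.

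First I would rewrite the hypothesis as $cz=-(ax+by)$ and take squared moduli of both sides. Since $|z|=1$, this gives $|c|^2=|ax+by|^2=|a|^2|x|^2+|b|^2|y|^2+2\operatorname{Re}(a\bar b\,x\bar y)=|a|^2+|b|^2+2\operatorname{Re}(a\bar b\,x\bar y)$, using $|x|=|y|=1$. Hence $\operatorname{Re}(a\bar b\,x\bar y)=\tfrac12(|c|^2-|a|^2-|b|^2)$. Next, set $w:=a\bar b\,x\bar y$, so that $|w|=|a||b|$ and, writing $K:=|a|^2+|b|^2-|c|^2$, we have $\operatorname{Re}w=-\tfrac12 K$. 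The automatic bound $(\operatorname{Re}w)^2\le|w|^2$ shows $K^2\le(2|a||b|)^2$, so there is a real number $d$ with $d^2=(2|a||b|)^2-(|a|^2+|b|^2-|c|^2)^2$ and $\operatorname{Im}w=\tfrac d2$; that is, $2a\bar b\,x\bar y=2w=-K+di$. Multiplying through by $y$ and using $|y|=1$ yields $2a\bar b\,x=(-K+di)\,y$.

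Finally I would clear the remaining conjugate. Multiplying the last identity by $b$ gives $2|b|^2 a x=(-K+di)\,by$, so, away from the degenerate case noted below, $by=\tfrac{2|b|^2 a x}{-K+di}$ and hence $2|a|^2 by=\tfrac{4|a|^2|b|^2\,ax}{-K+di}$. Since $4|a|^2|b|^2=K^2+d^2=(K+di)(K-di)$ and $-K+di=-(K-di)$, the right-hand side simplifies to $-(K+di)\,ax$, giving $(K+di)\,ax+2|a|^2 by=0$, which is exactly the assertion since $K+di=|a|^2+|b|^2-|c|^2+di$.

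I expect the only mildly delicate points to be the sign-and-realness justification for $d$ (handled by the modulus bound, which is forced by the hypotheses) and the division by $-K+di$. The latter fails precisely when $K=d=0$, in which case $K^2+d^2=(2|a||b|)^2=0$ forces $a=0$ or $b=0$; then the claimed relation reads either $(|b|^2-|c|^2+di)\cdot 0+0=0$ or $(|a|^2-|c|^2+di)\,ax+0=0$, and in the latter case $d^2=-(|a|^2-|c|^2)^2\ge 0$ forces $d=0$ and $|a|=|c|$, so the relation again holds trivially. Everything else is routine manipulation.
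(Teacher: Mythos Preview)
Your proof is correct. The paper takes a slightly different route: it first reduces via the change of variables $\tilde x=ax$, $\tilde y=by$, $\tilde z=cz$ to an auxiliary ``Heron-esque'' lemma about three complex numbers summing to zero with prescribed real moduli, and then proves that lemma geometrically, splitting into a collinear case and a genuine-triangle case and invoking the law of cosines to identify the rotation taking $\tilde x$ to $\tilde y$. Your argument is a direct algebraic computation on the original data: you extract $\operatorname{Re}(a\bar b\,x\bar y)$ from the squared-modulus identity, combine it with $|a\bar b\,x\bar y|=|a||b|$ to pin down $a\bar b\,x\bar y$ up to conjugation, and then clear conjugates. The two arguments are the same computation in different clothing---your $\operatorname{Re}w=-K/2$ step \emph{is} the law of cosines, and your determination of $\operatorname{Im}w$ is the Pythagorean step---but yours is more self-contained, since it avoids the auxiliary lemma and the geometric vocabulary. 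One small cleanup: you can sidestep the division by $-K+di$ (and most of the degenerate case analysis) by multiplying $2|b|^2\,ax=(-K+di)\,by$ on both sides by $K+di$, using $(K+di)(-K+di)=-(K^2+d^2)=-4|a|^2|b|^2$, and then cancelling $2|b|^2$; this leaves only the case $b=0$ to check separately.
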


As such, we append one of the following constraints to $A_s$ to obtain $A_{s0}$, and we append the other constraint to $A_s$ to get $A_{s1}$:
\begin{align*}
\Big(|a|^2+|b|^2-|c|^2\pm i\sqrt{(2|a||b|)^2-(|a|^2+|b|^2-|c|^2)^2}\Big)a\cdot \big(f(v_1)-f(v_2)\big)
&\\
+\quad 2|a|^2b\cdot \big(f(v_3)-f(v_4)\big)
&=0.
\end{align*}
(Note that if $d=0$, then $A_{s1}=A_{s0}$.)

\begin{example}
\label{ex.k3}
Suppose $G=K_3$.
We start by applying (L0).
Since $G$ contains no $4$-cycles, $A_0$ is the $0\times 3$ matrix that represents the trivial linear transformation $\mathbb{C}^{V(G)}\to\{0\}$.
Notice that $A_0$ is not restrictive enough for us to apply (L1a), (L1b), or (L2).
As such, we resort to (L3).
Denote the vertices of $G_0:=G$ by $u_1,u_2,u_3$.
Then every $f\in\operatorname{ker}A_0=\mathbb{C}^{V(G_0)}$ satisfies
\[
\big(f(u_1)-f(u_2)\big)
+\big(f(u_2)-f(u_3)\big)
+\big(f(u_3)-f(u_1)\big)
=0,
\]
and so Lemma~\ref{lem.subtle} gives that every unit-distance embedding $f\in\operatorname{ker}A_0$ of $G_0$ necessarily satisfies one of the following constraints
\[
(1\pm i\sqrt{3})\cdot\big(f(u_1)-f(u_2)\big)
+2\cdot\big(f(u_2)-f(u_3)\big)=0.
\]
As such, we put
\[
A_{00}
=[1+i\sqrt{3},~1-i\sqrt{3},~-2],
\qquad
A_{01}
=[1-i\sqrt{3},~1+i\sqrt{3},~-2].
\]
This produces two leaves to analyze: $\exists[f|G_0,A_{00}]$ and $\exists[f|G_0,A_{01}]$.
However, neither is amenable to (L1)--(L3), and so we attempt to embed $G_0$ with some $f\in\operatorname{ker}A_{00}\cup\operatorname{ker}A_{01}$.
In this case, $\operatorname{ker}A_{00}$ is the span of $(1,1,1)$ and $(1,\omega,\omega^2)$, where $\omega:=e^{2\pi i/3}$.
Geometrically, this means that every $f\in\operatorname{ker}A_{00}$ is a translation, rotation, and dilation of the unit-distance embedding $\frac{1}{\sqrt{3}}(1,\omega,\omega^2)$.
Similarly, by virtue of complex conjugation, every $f\in\operatorname{ker}A_{01}$ is a translation, rotation, and dilation of the \textit{reflected} unit-distance embedding $\frac{1}{\sqrt{3}}(1,\omega^2,\omega)$.
As such, one may obtain a unit-distance embedding by selecting any nonzero member of $\operatorname{ker}A_{00}\cup\operatorname{ker}A_{01}$ and rescaling so that one of the edges has unit distance.
\qed
\end{example}

Lemma~\ref{lem.subtle} is an immediate consequence of the following:

\begin{lemma}
\label{lem.heron-esque}
Given $x,y,z\in\mathbb{C}$ such that $x+y+z=0$, $|x|=a$, $|y|=b$, and $|z|=c$, then $(x,y)$ necessarily satisfies
\[
(a^2+b^2-c^2+id)\cdot x
+2a^2\cdot y
=0,
\]
where $d$ is some solution to $d^2=(2ab)^2-(a^2+b^2-c^2)^2$.
\end{lemma}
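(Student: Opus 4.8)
The plan is to eliminate $z$ using $z=-(x+y)$ and then extract everything from the single complex product $x\overline{y}$, whose modulus is $|x|\,|y|=ab$. The modulus constraint on $z$ pins down its real part, and then the modulus constraint $|x\overline{y}|=ab$ pins down its imaginary part up to sign — and that sign ambiguity is exactly the $\pm$ hidden in ``$d$ is some solution.'' Concretely, first I would compute
\[
c^2=|z|^2=|x+y|^2=|x|^2+|y|^2+2\operatorname{Re}(x\overline{y})=a^2+b^2+2\operatorname{Re}(x\overline{y}),
\]
so that $\operatorname{Re}(x\overline{y})=-\tfrac12(a^2+b^2-c^2)$. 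Combining this with $|x\overline{y}|^2=a^2b^2$ gives $\bigl(2\operatorname{Im}(x\overline{y})\bigr)^2=(2ab)^2-(a^2+b^2-c^2)^2$; note the right-hand side is automatically nonnegative, simply because $2\operatorname{Im}(x\overline{y})$ is a real number (this is the Heron-type quantity $16\cdot(\text{area})^2$ for a triangle with side lengths $a,b,c$). Hence we may \emph{choose} $d:=2\operatorname{Im}(x\overline{y})$ as a valid solution of $d^2=(2ab)^2-(a^2+b^2-c^2)^2$, and with this choice $x\overline{y}=\tfrac12\bigl(-(a^2+b^2-c^2)+id\bigr)$.

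Next I would verify the asserted identity after multiplying it through by $\overline{y}$. Writing $A:=a^2+b^2-c^2$ and using $A,d\in\mathbb{R}$, a two-line computation gives
\[
(A+id)\,x\overline{y}=\tfrac12(A+id)(-A+id)=-\tfrac12\bigl(A^2+d^2\bigr)=-\tfrac12(2ab)^2=-2a^2b^2,
\]
where the third equality is $d^2=(2ab)^2-A^2$. Since $\overline{y}\,y=|y|^2=b^2$, this yields
\[
\bigl((a^2+b^2-c^2+id)\,x+2a^2\,y\bigr)\,\overline{y}=-2a^2b^2+2a^2b^2=0.
\]

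To finish, I would divide by $\overline{y}$. If $y\neq0$ this is legitimate and delivers exactly the claimed relation $(a^2+b^2-c^2+id)\,x+2a^2\,y=0$. The sole remaining case is $y=0$: then $b=0$, and $z=-x$ forces $c=|z|=|x|=a$, so $A=0$ and $d^2=0$, whereupon the asserted identity degenerates to $0=0$. This proves Lemma~\ref{lem.heron-esque}, and Lemma~\ref{lem.subtle} follows at once by applying it to $X:=ax$, $Y:=by$, $Z:=cz$, which satisfy $X+Y+Z=0$ with $|X|=|a|$, $|Y|=|b|$, $|Z|=|c|$. There is no real obstacle here — the whole argument is a short identity check — but the one step requiring a little foresight is recognizing that $x\overline{y}$ is the right object to isolate: its real part is rigidly determined by the three modulus constraints, while its imaginary part is free only up to the sign that becomes the $\pm$ in front of $\sqrt{(2ab)^2-(a^2+b^2-c^2)^2}$.
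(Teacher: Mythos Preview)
Your proof is correct. The key computation — pinning down $x\overline{y}$ via $\operatorname{Re}(x\overline{y})=-\tfrac12(a^2+b^2-c^2)$ and $|x\overline{y}|=ab$, then multiplying the target identity by $\overline{y}$ — goes through cleanly, and the degenerate case $y=0$ is handled.

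Your route is genuinely different from the paper's, though closely related. The paper argues geometrically: it writes $bxw=ay$ for some unit-modulus $w$, interprets $x,y,z$ as directed edges of an $(a,b,c)$-triangle, identifies $w=-(\cos\theta\pm i\sin\theta)$ where $\theta$ is the angle opposite the side of length $c$, and then invokes the law of cosines to evaluate $\cos\theta$. This forces a separate treatment of the collinear (degenerate triangle) case. Your approach bypasses the trigonometry entirely by working directly with the complex number $x\overline{y}$ and reading off its real and imaginary parts algebraically; the law of cosines is effectively hidden in the expansion $|x+y|^2=|x|^2+|y|^2+2\operatorname{Re}(x\overline{y})$. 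The two are equivalent at heart (your $x\overline{y}$ is essentially $ab\,\overline{w}$), but your version is a touch more self-contained and collapses the case analysis to the single trivial boundary case $y=0$.
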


Indeed, given $a,b,c,x,y,z\in\mathbb{C}$ that satisfy the hypotheses of Lemma~\ref{lem.subtle}, then a change variables gives
\begin{alignat*}{5}
\tilde{x}
&:=ax,
\qquad
\tilde{y}
&&:=by,
\qquad
\tilde{z}
&&:=cz,\\
\tilde{a}
&:=|a|,
\qquad
\tilde{b}
&&:=|b|,
\qquad
\tilde{c}
&&:=|c|,
\end{alignat*}
which in turn satisfy $\tilde{x}+\tilde{y}+\tilde{z}=0$, $|\tilde{x}|=\tilde{a}$, $|\tilde{y}|=\tilde{b}$, and $|\tilde{z}|=\tilde{c}$.
Thus, Lemma~\ref{lem.heron-esque} implies Lemma~\ref{lem.subtle}.
The proof of Lemma~\ref{lem.heron-esque} is reminiscent of the proof of Heron's formula:

\begin{proof}[Proof of Lemma~\ref{lem.heron-esque}]
First, we consider the degenerate case in which $x$, $y$ and $z$ are collinear.
In this case,
\[
a^2+b^2-c^2
=|x|^2+|y|^2-|x+y|^2
=-2\operatorname{Re}(\overline{x}y)
=-2\overline{x}y.
\]
In particular, $\overline{x}y\in\mathbb{R}$ implies that $(\overline{x}y)^2=|\overline{x}y|^2=(|x||y|)^2$, and so
\[
d^2
=(2ab)^2-(a^2+b^2-c^2)^2
=(2|x||y|)^2-(2\overline{x}y)^2
=0.
\]
Combining these observations then gives
\[
(a^2+b^2-c^2+id)\cdot x
+2a^2\cdot y
=-2\overline{x}yx+2|x|^2y
=0.
\]
It remains to consider the non-degenerate case.
Since $bx$ and $ay$ have the same modulus, there exists $w\in\mathbb{C}$ of unit modulus such that $bxw=ay$.
To determine $w$, it is helpful to interpret $x$, $y$ and $z$ as directed edges in an $(a,b,c)$-triangle.
Let $\theta$ denote the triangle's angle opposite the edge of length $c$.
Then
\[
w
=e^{\pm i(\pi-\theta)}
=-(\cos\theta\pm i\sin\theta),
\]
where the sign is determined by the orientation of our directed triangle.
By the Pythagorean theorem and the law of cosines, we conclude that $w$ is one of
\[
-\cos\theta\mp i\sqrt{1-\cos^2\theta}
%=-\tfrac{a^2+b^2-c^2}{2ab}\mp i\sqrt{1-(\tfrac{a^2+b^2-c^2}{2ab})^2}
=-\frac{1}{2ab}\Big(a^2+b^2-c^2\pm i\sqrt{(2ab)^2-(a^2+b^2-c^2)^2}\Big).
\]
Finally, we clear denominators to obtain
\[
0
=-2a(bxw-ay)
=(a^2+b^2-c^2+id)\cdot x + 2a^2\cdot y.
\qedhere
\]
\end{proof}

Notice that our algorithm up to this point fails to do anything if the girth of the input graph is at least $5$.
Indeed, our algorithm returns ``I don't know'' for every non-unit-distance graph of girth $\geq5$, even though the most informative response would a proof that no embedding exists.
As an example of such ``bad'' input graphs, there exist graphs of girth $\geq5$ with chromatic number $\geq8$ (see Theorem~3.1 in~\cite{Lovasz:68}), while unit-distance graphs necessarily have chromatic number at most $7$~(see the solution to Problem~2.4 in~\cite{Soifer:08}).
However, all such graphs have at least $57$ vertices~(see Table~1 in~\cite{ExooG:19}), which is beyond the scope of this paper.
Still, there are some graphs on few vertices for which our algorithm returns ``I don't know''.

Recall that the embedding in Example~\ref{ex.k3} was determined up to trivial ambiguities by the $A_s$'s.
In general, there will be additional nonlinear degrees of freedom.
For example, in the case of $G=C_4$, we have $A_0=[+1,-1,+1,-1]$, at which point neither of (L1) and (L2) is applicable, and (L3) fails to deliver any new constraints.
As such, we seek an embedding in $\operatorname{ker}A_0$, namely, the span of $(1,1,1,1)$, $(1,1,-1,-1)$ and $(1,-1,-1,1)$.
There are too many degrees of freedom to determine an embedding up to trivial ambiguities, and so we impose an additional constraint.
Notice there is no nontrivial $(a,b)\in\mathbb{C}^2$ such that
\begin{equation}
\label{eq.new constraint}
a\big(f(u_1)-f(u_2)\big)+b\big(f(u_2)-f(u_3)\big)=0
\end{equation}
for every $f\in\operatorname{ker}A_0$.
For this reason, we say the edges $\{u_1,u_2\}$ and $\{u_2,u_3\}$ are \textit{linearly independent}.
Put $a=1$, draw $b$ uniformly from the complex unit circle, and add the constraint \eqref{eq.new constraint} to $A_0$ to get $A_{00}$.
Then $\operatorname{ker}A_{00}$ is spanned by $(1,1,1,1)$ and a vector of the form $(s,it,-s,-it)$ with $s,t\in\mathbb{R}$ determined by $b$.
Every vector in this subspace is a translation, rotation, and dilation of the same unit-distance embedding of $C_4$.
In general, we iteratively introduce random constraints on a pair of linearly independent edges until $\operatorname{ker}A_s$ is $2$-dimensional, at which point an embedding is determined up to translation, rotation, and dilation.
If this embedding is unit-distance, we are done.
Otherwise, we try again some number of times until we give up and declare ``I don't know''.

We ran this algorithm on each of the graphs that survived the filters in the previous two sections.
The algorithm did not return ``I don't know'' for any of these graphs, and the ones that survived this final test are illustrated in Table~\ref{table:big}.
This concludes our proof of Theorem~\ref{thm.main result}(c).

\section{Discussion}
\label{section:discuss}

In this paper, we improved the best known upper bounds on the maximum number of edges in a unit-distance graph on $n$ vertices for various values of $n$.
What follows are a few ideas for subsequent work.
First, it would be interesting to use the custom embeddability solver in Section~\ref{section:embed} to reproduce the forbidden subgraphs established by Globus and Parshall in~\cite{GlobusP:19}, and perhaps even extend their result to unit-distance graphs on $10$ vertices.
Would such an extension make $u(22)$ accessible?
Next, we find that totally unfaithful unit-distance graphs are very effective at pruning candidate graphs, and so it would be valuable to find more examples of such graphs.  
Finally, most of the embeddings in Table~\ref{table:big} reside in what Engel et al.~\cite{EngelEtal:24} refer to as the \textit{Moser ring}, and it would be interesting if looking further within this and related rings could inspire more results related to unit-distance graphs.

\section*{Acknowledgments}

This work was inspired in part by the Polymath16 project on the chromatic number of the plane~\cite{Mixon:online}.
DGM was partially supported by AFOSR FA9550-18-1-0107, NSF DMS 1829955, and NSF DMS 2220304.
HP was partially supported by an AMS-Simons Travel Grant.

\appendix

\section{Unit-distance graphs of maximum density}
\label{appendix.a}

Table~\ref{table:big} presents embeddings of all of the densest unit-distance graphs on at most $21$ vertices.
All but one of these embeddings can be viewed as a slight growth of a smaller embedding, which we illustrate by drawing new edges in color on top of a grayed-out smaller embedding.
(The exception here is the first embedding with $n=21$, which we present in full color.)
For all but one of the embeddings, all of the edges are drawn at an angle that is a $60$-degree rotation of an edge from the \textit{Moser spindle}:

\begin{center}
\begin{tikzpicture}
\draw[myedge0y] (8) -- (17);
\draw[myedge0y] (1) -- (17);
\draw[myedge34y] (1) -- (6);
\draw[myedge0y] (1) -- (8);
\draw[myedge34y] (1) -- (7);
\draw[myedge0y] (16) -- (17);
\draw[myedge0y] (8) -- (16);
\draw[myedge34y] (6) -- (7);
\draw[myedge17y] (4) -- (16);
\draw[myedge34y] (4) -- (6);
\draw[myedge34y] (4) -- (7);
\node[mynode,at=(1)] {};
\node[mynode,at=(16)] {};
\node[mynode,at=(17)] {};
\node[mynode,at=(6)] {};
\node[mynode,at=(8)] {};
\node[mynode,at=(4)] {};
\node[mynode,at=(7)] {};
\end{tikzpicture}
\end{center}
We color code these edges accordingly.
The only exception here is the first embedding with $n=17$, which is obtained by adding a vertex and two edges to the densest unit-distance graph with $n=16$.
Since this is the only graph in Table~\ref{table:big} that is not related to the Moser spindle in this way, it is also the only one that was not already discovered by Engel et al.\ in~\cite{EngelEtal:24}.

Despite our color coding, some of these graphs are not rigid.
For example, our first embedding with $n=6$ is the Minkowski sum of a triangle and an edge, and it exhibits a degree of freedom from the relative angle between the triangle and edge summands.
Similarly, the first embedding with $n=21$ is the Minkowski sum of a triangle $T$ and the wheel graph $W$ on $7$ vertices.
Accordingly,
\[
u(21)
=e(T+W)
=n(T)e(W)+e(T)n(W)
=3\cdot12+3\cdot7
=57.
\]
In other cases, the embedding can be viewed as a Minkowski sum between summands whose relative angle is carefully selected to introduce an extra edge.
(Such extra edges are not possible if one of the summands is a triangle, as in the above examples.)
For example, one may view the embedding with $n=4$ (i.e., the \textit{diamond}) as a Minkowski sum of two edges with a single bonus edge.
Similarly, the embedding with $n=16$ is a Minkowski sum of two copies of the diamond with a single bonus edge.

In addition to the embeddings in Table~\ref{table:big}, we also provide the graph6 codes for the underlying abstract graphs in Table~\ref{table:graph6}.
The graph6 codes have been canonicalized by the \texttt{nauty} program \texttt{labelg}.
These codes also appear in an ancillary file of the arXiv version of this paper.

\newpage

\begin{center}
\begin{longtblr}[
caption = {Unit-distance graphs of maximum density},
label = {table:big},
]{
colspec = {|X[2,c,m] | X[2,c,m] | X[10,c,f] X[10,c,f] X[10,c,f] X[10,c,f]|},
stretch = 0,
rowsep = 6pt,
rowhead = 1,
%hlines = {black, 1pt},
}
\hline
$n$ & \!$u(n)$ & embedding \\ \hline
$0$ & $0$
& \includegraphics[page=1,scale=0.63]{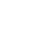}
\\ \hline
$1$ & $0$
& \includegraphics[page=2,scale=0.63]{figure.pdf}
\\ \hline
$2$ & $1$
& \includegraphics[page=3,scale=0.63]{figure.pdf}
\\ \hline
$3$ & $3$
& \includegraphics[page=4,scale=0.63]{figure.pdf}
\\ \hline
$4$ & $5$
& \includegraphics[page=5,scale=0.63]{figure.pdf}
\\ \hline
$5$ & $7$
& \includegraphics[page=6,scale=0.63]{figure.pdf}
\\ \hline
$6$ & $9$
& \includegraphics[page=7,scale=0.63]{figure.pdf}
& \includegraphics[page=8,scale=0.63]{figure.pdf}
& \includegraphics[page=9,scale=0.63]{figure.pdf}
& \includegraphics[page=10,scale=0.63]{figure.pdf}
\\ \hline
$7$ & $12$
& \includegraphics[page=11,scale=0.63]{figure.pdf}
\\ \hline
$8$ & $14$
& \includegraphics[page=12,scale=0.63]{figure.pdf}
& \includegraphics[page=13,scale=0.63]{figure.pdf}
& \includegraphics[page=14,scale=0.63]{figure.pdf}
\\ \hline
$9$ & $18$
& \includegraphics[page=15,scale=0.63]{figure.pdf}
\\ \hline
$10$ & $20$
& \includegraphics[page=16,scale=0.63]{figure.pdf}
\\ \hline
$11$ & $23$
& \includegraphics[page=17,scale=0.63]{figure.pdf}
& \includegraphics[page=18,scale=0.63]{figure.pdf}
\\ \hline
$12$ & $27$
& \includegraphics[page=19,scale=0.63]{figure.pdf}
\\ \hline
$13$ & $30$
& \includegraphics[page=20,scale=0.63]{figure.pdf}
\\ \hline
$14$ & $33$
& \includegraphics[page=21,scale=0.63]{figure.pdf}
& \includegraphics[page=22,scale=0.63]{figure.pdf}
\\ \hline
$15$ & $37$
& \includegraphics[page=23,scale=0.63]{figure.pdf}
\\ \hline
$16$ & $41$
& \includegraphics[page=24,scale=0.63]{figure.pdf}
\\ \hline
$17$ & $43$
& \includegraphics[page=25,scale=0.63]{figure.pdf}
& \includegraphics[page=26,scale=0.63]{figure.pdf}
& \includegraphics[page=27,scale=0.63]{figure.pdf}
& \includegraphics[page=28,scale=0.63]{figure.pdf}
\\
&
& \includegraphics[page=29,scale=0.63]{figure.pdf}
& \includegraphics[page=30,scale=0.63]{figure.pdf}
& \includegraphics[page=31,scale=0.63]{figure.pdf}
\\ \hline
$18$ & $46$
& \includegraphics[page=32,scale=0.63]{figure.pdf}
& \includegraphics[page=33,scale=0.63]{figure.pdf}
& \includegraphics[page=34,scale=0.63]{figure.pdf}
& \includegraphics[page=35,scale=0.63]{figure.pdf}
\\
&
& \includegraphics[page=36,scale=0.63]{figure.pdf}
& \includegraphics[page=37,scale=0.63]{figure.pdf}
& \includegraphics[page=38,scale=0.63]{figure.pdf}
& \includegraphics[page=39,scale=0.63]{figure.pdf}
\\
&
& \includegraphics[page=40,scale=0.63]{figure.pdf}
& \includegraphics[page=41,scale=0.63]{figure.pdf}
& \includegraphics[page=42,scale=0.63]{figure.pdf}
& \includegraphics[page=43,scale=0.63]{figure.pdf}
\\
&
& \includegraphics[page=44,scale=0.63]{figure.pdf}
& \includegraphics[page=45,scale=0.63]{figure.pdf}
& \includegraphics[page=46,scale=0.63]{figure.pdf}
& \includegraphics[page=47,scale=0.63]{figure.pdf}
\\ \hline
$19$ & $50$
& \includegraphics[page=48,scale=0.63]{figure.pdf}
& \includegraphics[page=49,scale=0.63]{figure.pdf}
& \includegraphics[page=50,scale=0.63]{figure.pdf}
\\ \hline
$20$ & $54$ 
& \includegraphics[page=51,scale=0.63]{figure.pdf}
\\ \hline
$21$ & $57$ 
& \includegraphics[page=52,scale=0.63]{figure.pdf}
& \includegraphics[page=53,scale=0.63]{figure.pdf}
& \includegraphics[page=54,scale=0.63]{figure.pdf}
& \includegraphics[page=55,scale=0.63]{figure.pdf}
\\
&
& \includegraphics[page=56,scale=0.63]{figure.pdf}
\\ \hline
\end{longtblr}
\end{center}

\newpage

\begin{table}[t]
\caption{\label{table:graph6}
graph6 codes of unit-distance graphs of maximum density.}
\begin{center}
\scriptsize{
\begin{tabular}{|r|l|}
\hline
$n$ & graph6 code \\ \hline
$0$&\verb=?= \\\hline
$1$&\verb=@= \\\hline
$2$&\verb=A_= \\\hline
$3$&\verb=Bw= \\\hline
$4$&\verb=C^= \\\hline
$5$&\verb=DR{= \\\hline
$6$&\verb=E{Sw= \\
&\verb=EDZw= \\
&\verb=EQlw= \\
&\verb=EElw= \\\hline
$7$&\verb=FoSvw= \\\hline
$8$&\verb=G`iiqk= \\
&\verb=G`iZQk= \\
&\verb=GIISZ{= \\\hline
$9$&\verb=H{dQXgj= \\\hline
$10$&\verb=IISpZATaw= \\\hline
$11$&\verb=J`GWDeNYak_= \\
&\verb=J`GhcpJdQL_= \\\hline
$12$&\verb=KwC[KLQIibDJ= \\\hline
$13$&\verb=L@rLaDBIOidEDJ= \\\hline
$14$&\verb=M_C_?FBNLcTHTaRP_= \\
&\verb=M_DbIo`GWg`RdIah_= \\\hline
$15$&\verb=NGECKA@WW{igRHKpDSW= \\\hline
$16$&\verb=O@iib@`cC_iOAsAi_ioHZ= \\\hline
$17$&\verb=P?CpiPHS@OYAiA@S_UWIY?jK= \\
&\verb=P@Oa@GoQ?d@j@KEoWPOFef?w= \\
&\verb=P?_YQT_K@_r_wG@c_hWDi?ZK= \\
&\verb=P?O`H`OSHaRoq@@I_RWZAAsK= \\
&\verb=P?SaACcK@_q{u??k_LW[aBQK= \\
&\verb=PJPK?CA?gYEF_qEGaaXRAHSK= \\
&\verb=PASaACcG@?rB`xDcAhGTaAYK= \\\hline
$18$&\verb=Q_HG_gT_`?cB?q?hiQ?QTAH^kB?= \\
&\verb=Q`G@O?oDII@YAWD_OHiGs@wqWBo= \\
&\verb=Q?CX@C_CAXOYAg@W`QOIbwINOV?= \\
&\verb=Q_GP@COCGC_LBeBXgwCP`iBDSK_= \\
&\verb=Q?GP?aHP_K?hAaAPtDbCidCpPi?= \\
&\verb=Q_GP?ggC?ZIA@KApSOQCx?{qKF_= \\
&\verb=Q_?oq?`APgPIWW?e_cas@?M]EF_= \\
&\verb=Q_?HGoSDHKBG?J?Ft?ROB_whBPW= \\
&\verb=Q_?HPGdI_oA_?N?oQ_ioQ_ix@VO= \\
&\verb=Q_?P@CgE?oi_?d?R_uOJRiGrNC?= \\
&\verb=Q_?Oh?`E?o_i?t@J?TZOIi?nN_?= \\
&\verb=QGCCIGdX?oq?c@?r?T[K_QPSgaw= \\
&\verb=Q??OP_g?WI_U@bdDHRBaKiIbE@_= \\
&\verb=Qw?G@_K?gDoOO`EBA`XbKaTPDQg= \\
&\verb=QKc?G_HW?G_b`_GqCSkFcSQpGeg= \\
&\verb=QwC?H?W@gA``Ab_XGKXAod@TQAw= \\\hline
$19$&\verb=RJ?GKEB`CGh?AKAIh_`CKC`S`QoaRW= \\
&\verb=R@KCAHD`CG_U?jH_SOI_IQ?sPSoQTW= \\
&\verb=R?CCGx_oE?_Q?b`oKKpgGJ?cOtOPUW= \\\hline
$20$&\verb=S?E@cQHWB?_Q?bDPAcYWKM_BC?OAiW@T[= \\\hline
$21$&\verb=Tsc@IGC@GD?R?S?Wd@A_CK@HG@VM??PRKOUZ= \\
&\verb=TCKx?D?OI?OMCBSA_L?ApA_gEA\EG?PBSCPV= \\
&\verb=TCTWACAG@@CDKC?e?QgQA@OMOq]F??OUcCEj= \\
&\verb=TCS`?H??XIZ?K_Co`CG@JO[?EOSCpGOTSCE\= \\
&\verb=T??_`OhSCSYA@I?c?OyWBEa@c?SIU?Aa[?el= \\\hline
\end{tabular}
}
\end{center}
\end{table}

\end{document}